\newtheorem{theorem}{Theorem}
\newtheorem{lem}{Lemma}
\newtheorem{defn}{Definition}
\newtheorem{assumption}{Assumption}
\begin{document}
\bibliographystyle{plain}
\title[Brouwer's fixed point theorem with sequentially at most one fixed point]{Brouwer's fixed point theorem with sequentially at most one fixed point}

\author{Yasuhito Tanaka}
\address{Faculty of Economics, Doshisha University, Kamigyo-ku, Kyoto, 602-8580, Japan}
\email{yasuhito@mail.doshisha.ac.jp}

\date{}

\begin{abstract}
We present a constructive proof of Brouwer's fixed point theorem with sequentially at most one fixed point, and apply it to the mini-max theorem of zero-sum games.
\end{abstract}

\keywords{Brouwer's fixed point theorem, sequentially at most one fixed point, constructive mathematics}

\subjclass[2000]{26E40, 91A10}

\maketitle

\section{Introduction}

It is well known that Brouwer's fixed point theorem can not be constructively proved\footnote{\cite{kel} provided a \emph{constructive} proof of Brouwer's fixed point theorem. But it is not constructive from the view point of constructive mathematics \'{a} la Bishop. It is sufficient to say that one dimensional case of Brouwer's fixed point theorem, that is, the intermediate value theorem is non-constructive. See \cite{br} or \cite{da}.}. Sperner's lemma which is used to prove Brouwer's theorem, however, can be constructively proved. Some authors have presented an approximate version of Brouwer's theorem using Sperner's lemma. See \cite{da} and \cite{veld}.  Thus, Brouwer's fixed point theorem is constructively, in the sense of constructive mathematics \'{a} la Bishop, proved in its approximate version.

Also in \cite{da} Dalen states a conjecture that a uniformly continuous function $f$ from a simplex to itself, with property that each open set contains a point $x$ such that $x\neq f(x)$, which means $|x-f(x)|>0$, and also at every point $x$ on the boundaries of the simplex $x\neq f(x)$, has an exact fixed point. In this note we present a partial answer to Dalen's conjecture.

 Recently \cite{berger} showed that the following theorem is equivalent to Brouwer's fan theorem.
\begin{quote}
Each uniformly continuous function $\varphi$ from a compact metric space $X$ into itself with at most one fixed point and approximate fixed points has a fixed point.
\end{quote}
By reference to the notion of \emph{sequentially at most one maximum} in \cite{berg} we require a stronger condition that a function $\varphi$ has \textit{sequentially at most one fixed point}, and will show the following result.
\begin{quote}
Each uniformly continuous function $\varphi$ from a compact metric space $X$ into itself with \emph{sequentially at most one fixed point} and approximate fixed points has a fixed point,
\end{quote}
without the fan theorem. In \cite{orevkov} Orevkov constructed a computably coded continuous function $f$ from the unit square to itself, which is defined at each computable point of the square, such that $f$ has no computable fixed point. His map consists of a retract of the computable elements of the square to its boundary followed by a rotation of the boundary of the square. As pointed out by Hirst in \cite{hirst}, since there is no retract of the square to its boundary, his map does not have a total extension.

In the next section we present our theorem and its proof. In Section 3, as an application of the theorem we consider the mini-max theorem of two-person zero-sum games.

\section{Theorem and proof}

Let $\mathbf{p}$ be a point in a compact metric space $X$, and consider a uniformly continuous function $\varphi$ from $X$ into itself. According to \cite{da} and \cite{veld} $\varphi$ has an approximate fixed point. It means 
\[\mathrm{For\ each}\ \varepsilon>0\ \mathrm{there\ exists}\ \mathbf{p}\in X\ \mathrm{such\ that}\ |\mathbf{p}-\varphi(\mathbf{p})|<\varepsilon.\]
Since $\varepsilon>0$ is arbitrary,
\[\inf_{\mathbf{p}\in X}|\mathbf{p}-\varphi(\mathbf{p})|=0.\]

The notion that $\varphi$ has at most one fixed point is defined as follows;
\begin{defn}[At most one fixed point]
For all $\mathbf{p}, \mathbf{q}\in X$, if $\mathbf{p}\neq \mathbf{q}$, then $\varphi(\mathbf{p})\neq \mathbf{p}$ or $\varphi(\mathbf{q})\neq \mathbf{q}$.
\end{defn}
Next by reference to the notion of \emph{sequentially at most one maximum} in \cite{berg}, we define the notion that $\varphi$ has \emph{sequentially at most one fixed point} as follows;
\begin{defn}[Sequentially at most one fixed point]
All sequences $(\mathbf{p}_n)_{n\geq 1}$, $(\mathbf{q}_n)_{n\geq 1}$ in $X$ such that $|\varphi(\mathbf{p}_n)-\mathbf{p}_n|\longrightarrow 0$ and $|\varphi(\mathbf{q}_n)-\mathbf{q}_n|\longrightarrow 0$ are eventually close in the sense that $|\mathbf{p}_n-\mathbf{q}_n|\longrightarrow 0$.
\end{defn}

Now we show the following lemma.
\begin{lem}
Let $\varphi$ be a uniformly continuous function from a compact metric space $X$ into itself. Assume $\inf_{\mathbf{p}\in X}|\mathbf{p}-\varphi(\mathbf{p})|=0$. If the following property holds,
\begin{quote}
For each $\delta>0$ there exists $\varepsilon>0$ such that if $\mathbf{p}, \mathbf{q}\in X$, $|\varphi(\mathbf{p})-\mathbf{p}|<\varepsilon$ and $|\varphi(\mathbf{q})-\mathbf{q}|<\varepsilon$, then $|\mathbf{p}-\mathbf{q}|\leq \delta$,
\end{quote}
then, there exists a point $\mathbf{r}\in X$ such that $\varphi(\mathbf{r})=\mathbf{r}$, that is , we have a fixed point of $\varphi$. \label{fix0}
\end{lem}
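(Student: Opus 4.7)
The plan is to manufacture a Cauchy sequence of approximate fixed points whose limit, by continuity, is an exact fixed point. The hypothesis that approximate fixed points cluster together (the $(\varepsilon,\delta)$ condition in the statement) is precisely what upgrades the existence of approximate fixed points to the Cauchy property.

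More concretely, first I would use the assumption $\inf_{\mathbf{p}\in X}|\mathbf{p}-\varphi(\mathbf{p})|=0$ to choose, for every positive integer $n$, a point $\mathbf{p}_n\in X$ with $|\varphi(\mathbf{p}_n)-\mathbf{p}_n|<1/n$. Next, for a given $\delta>0$, invoke the stated property to obtain $\varepsilon>0$ such that whenever $\mathbf{p},\mathbf{q}\in X$ satisfy $|\varphi(\mathbf{p})-\mathbf{p}|<\varepsilon$ and $|\varphi(\mathbf{q})-\mathbf{q}|<\varepsilon$, one has $|\mathbf{p}-\mathbf{q}|\leq \delta$. Picking $N$ with $1/N<\varepsilon$, this gives $|\mathbf{p}_n-\mathbf{p}_m|\leq \delta$ for all $n,m\geq N$, so $(\mathbf{p}_n)$ is a Cauchy sequence.

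Since $X$ is a compact metric space it is complete, hence $(\mathbf{p}_n)$ converges to some $\mathbf{r}\in X$. By the uniform continuity of $\varphi$ (and continuity of the metric) we have $|\varphi(\mathbf{r})-\mathbf{r}|=\lim_{n\to\infty}|\varphi(\mathbf{p}_n)-\mathbf{p}_n|=0$, so $\varphi(\mathbf{r})=\mathbf{r}$ as required.

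The only delicate point, and the one I would check most carefully, is that each step remains admissible in Bishop-style constructive mathematics: the approximate fixed points are produced by the result of \cite{da} and \cite{veld} cited just before the lemma, completeness of a compact (i.e.\ complete and totally bounded) metric space allows the Cauchy sequence to converge without appeal to a choice principle beyond countable choice used implicitly in selecting $(\mathbf{p}_n)$, and the final continuity argument does not require decidable equality because we conclude $|\varphi(\mathbf{r})-\mathbf{r}|=0$ directly, which is the constructively admissible way to assert $\varphi(\mathbf{r})=\mathbf{r}$.
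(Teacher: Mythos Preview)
Your proposal is correct and follows essentially the same route as the paper: build a sequence of approximate fixed points, use the $(\varepsilon,\delta)$ hypothesis to show it is Cauchy, pass to the limit in the complete space $X$, and conclude $|\varphi(\mathbf{r})-\mathbf{r}|=0$ by continuity. The only cosmetic difference is that you explicitly take $|\varphi(\mathbf{p}_n)-\mathbf{p}_n|<1/n$, whereas the paper simply picks a sequence with $|\varphi(\mathbf{p}_n)-\mathbf{p}_n|\to 0$ and then computes the appropriate $N$.
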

\begin{proof}
Choose a sequence $(\mathbf{p}_n)_{n\geq 1}$ in $X$ such that $|\varphi(\mathbf{p}_n)-\mathbf{p}_n|\longrightarrow 0$. Compute $N$ such that $|\varphi(\mathbf{p}_n)-\mathbf{p}_n|<\varepsilon$ for all $n\geq N$. Then, for $m, n\geq N$ we have $|\mathbf{p}_m-\mathbf{p}_n|\leq \delta$. Since $\delta>0$ is arbitrary, $(\mathbf{p}_n)_{n\geq 1}$ is a Cauchy sequence in $X$, and converges to a limit $\mathbf{r}\in X$. The continuity of $\varphi$ yields $|\varphi(\mathbf{r})-\mathbf{r}|=0$, that is, $\varphi(\mathbf{r})=\mathbf{r}$.
\end{proof}

Next we show the following theorem.
\begin{theorem}
Each uniformly continuous function $\varphi$ from a compact metric space $X$ into itself with sequentially at most one fixed point and approximate fixed points has a fixed point
\end{theorem}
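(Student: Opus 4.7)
The plan is to derive the $\varepsilon$-$\delta$ property that appears as the hypothesis of Lemma~\ref{fix0} from the assumption that $\varphi$ has sequentially at most one fixed point; once this is in hand, the lemma delivers the fixed point immediately. The other hypothesis of the lemma, namely $\inf_{\mathbf{p}\in X}|\mathbf{p}-\varphi(\mathbf{p})|=0$, is already supplied by the approximate-fixed-point assumption, as explained in the opening paragraph of this section.

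To obtain the $\varepsilon$-$\delta$ property I would argue by contradiction. Suppose the property fails: then there is some $\delta_0>0$ such that for every $n\geq 1$ one can select points $\mathbf{p}_n,\mathbf{q}_n\in X$ with $|\varphi(\mathbf{p}_n)-\mathbf{p}_n|<1/n$, $|\varphi(\mathbf{q}_n)-\mathbf{q}_n|<1/n$, and yet $|\mathbf{p}_n-\mathbf{q}_n|>\delta_0$. The resulting sequences $(\mathbf{p}_n)_{n\geq 1}$ and $(\mathbf{q}_n)_{n\geq 1}$ satisfy $|\varphi(\mathbf{p}_n)-\mathbf{p}_n|\longrightarrow 0$ and $|\varphi(\mathbf{q}_n)-\mathbf{q}_n|\longrightarrow 0$, while $|\mathbf{p}_n-\mathbf{q}_n|$ stays above $\delta_0$, contradicting the assumption that $\varphi$ has sequentially at most one fixed point. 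Hence the $\varepsilon$-$\delta$ property holds, and Lemma~\ref{fix0} yields a point $\mathbf{r}\in X$ with $\varphi(\mathbf{r})=\mathbf{r}$.

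The only delicate point is the constructive legitimacy of the contradiction step. However, since the negated $\varepsilon$-$\delta$ statement supplies explicit witnesses $\mathbf{p}_n,\mathbf{q}_n$ at each scale $\varepsilon=1/n$, the derivation produces honest sequences in $X$ whose properties conflict via a direct numerical inequality, so no appeal to excluded middle is needed. I expect this to be the step that requires the most careful wording in the written-out proof, but no genuine obstacle arises beyond it; the rest is bookkeeping that invokes the already-established Lemma~\ref{fix0}.
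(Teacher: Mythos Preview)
Your reduction to Lemma~\ref{fix0} is exactly the paper's plan, and in classical logic the contrapositive argument you sketch would be fine. The gap is that this paper works in Bishop-style constructive mathematics, and your reductio is not constructively valid at two points. First, from the bare negation of the $\varepsilon$--$\delta$ property $\forall\delta\,\exists\varepsilon\,\forall\mathbf{p},\mathbf{q}\,(\ldots)$ you cannot constructively extract a witness $\delta_0$, still less the sequence of counterexample pairs $(\mathbf{p}_n,\mathbf{q}_n)$: the passage from $\neg\forall\exists$ to $\exists\forall\neg$ already uses excluded middle. Second, even granting that unfolding, a reductio delivers only the double negation of the $\varepsilon$--$\delta$ property, and removing $\neg\neg$ from a statement whose content is an existential ($\exists\varepsilon$) is exactly what constructive mathematics disallows. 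Your reassurance that ``no appeal to excluded middle is needed'' because the witnesses are ``honest'' addresses the wrong worry: the issue is not whether the hypothetical sequences are well defined, but that your argument never \emph{constructs} the $\varepsilon$ that Lemma~\ref{fix0} requires.

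The paper's proof supplies precisely this missing construction. For a fixed $\delta$ it works on the compact set $K=\{(\mathbf{p},\mathbf{q}):|\mathbf{p}-\mathbf{q}|\geq\delta\}$ and, using the constructive cotransitivity $\inf<2^{-n}$ or $\inf>2^{-n-1}$ for the uniformly continuous map $(\mathbf{p},\mathbf{q})\mapsto\max(|\varphi(\mathbf{p})-\mathbf{p}|,|\varphi(\mathbf{q})-\mathbf{q}|)$, builds an increasing binary sequence $(\lambda_n)$. Sequences $(\mathbf{p}_n),(\mathbf{q}_n)$ are then defined case by case on $\lambda_n$ (taking a pair in $K$ when $\lambda_n=0$, and a pre-chosen approximate fixed point $\mathbf{r}_n$ for both when $\lambda_n=1$); the sequentially-at-most-one hypothesis is applied directly to these sequences, yielding an $N$ with $|\mathbf{p}_N-\mathbf{q}_N|<\delta$, which forces $\lambda_N=1$ and \emph{exhibits} $\varepsilon=2^{-N-1}$. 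This increasing-binary-sequence device is the standard constructive replacement for your contrapositive, and it is the idea your proposal is missing.
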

\begin{proof}
Choose a sequence $(\mathbf{r}_n)_{n\geq 1}$ in $X$ such that $|\varphi(\mathbf{r}_n)-\mathbf{r}_n|\longrightarrow 0$. In view of Lemma \ref{fix0} it is enough to prove that the following condition holds.
\begin{quote}
For each $\delta>0$ there exists $\varepsilon>0$ such that if $\mathbf{p}, \mathbf{q}\in X$, $|\varphi(\mathbf{p})-\mathbf{p}|<\varepsilon$ and $|\varphi(\mathbf{q})-\mathbf{q}|<\varepsilon$, then $|\mathbf{p}-\mathbf{q}|\leq \delta$.
\end{quote}
Assume that the set
\[K=\{(\mathbf{p},\mathbf{q})\in X\times X:\ |\mathbf{p}-\mathbf{q}|\geq \delta\}\]
is nonempty and compact\footnote{See Theorem 2.2.13 of \cite{bv}.}. Since the mapping $(\mathbf{p},\mathbf{q})\longrightarrow \max(|\varphi(\mathbf{p})-\mathbf{p}|,|\varphi(\mathbf{q})-\mathbf{q}|)$ is uniformly continuous, we can construct an increasing binary sequence $(\lambda_n)_{n\geq 1}$ such that
\[\lambda_n=0\Rightarrow \inf_{(\mathbf{p},\mathbf{q})\in K}\max(|\varphi(\mathbf{p})-\mathbf{p}|,|\varphi(\mathbf{q})-\mathbf{q}|)<2^{-n},\]
\[\lambda_n=1\Rightarrow \inf_{(\mathbf{p},\mathbf{q})\in K}\max(|\varphi(\mathbf{p})-\mathbf{p}|,|\varphi(\mathbf{q})-\mathbf{q}|)>2^{-n-1}.\]
It suffices to find $n$ such that $\lambda_n=1$. In that case, if $|\varphi(\mathbf{p})-\mathbf{p}|<2^{-n-1}$, $|\varphi(\mathbf{q})-\mathbf{q}|<2^{-n-1}$, we have $(\mathbf{p},\mathbf{q})\notin K$ and $|\mathbf{p}-\mathbf{q}|\leq \delta$. Assume $\lambda_1=0$. If $\lambda_n=0$, choose $(\mathbf{p}_n, \mathbf{q}_n)\in K$ such that $\max(|\varphi(\mathbf{p}_n)-\mathbf{p}_n|, |\varphi(\mathbf{q}_n)-\mathbf{q}_n|)<2^{-n}$, and if $\lambda_n=1$, set $\mathbf{p}_n=\mathbf{q}_n=\mathbf{r}_n$. Then, $|\varphi(\mathbf{p}_n)-\mathbf{p}_n|\longrightarrow 0$ and $|\varphi(\mathbf{q}_n)-\mathbf{q}_n|\longrightarrow 0$, so $|\mathbf{p}_n-\mathbf{q}_n|\longrightarrow 0$. Computing $N$ such that $|\mathbf{p}_N-\mathbf{q}_N|<\delta$, we must have $\lambda_N=1$. We have completed the proof.
\end{proof}

\section{Application: Minimax theorem of zero-sum games}

Consider a two person zero-sum game. There are two players $A$ and $B$. Player $A$ has $m$ alternative pure strategies, and the set of his pure strategies is denoted by $S_A=\{a_1, a_2, \dots, a_m\}$. Player $B$ has $n$ alternative pure strategies, and the set of his pure strategies is denoted by $S_B=\{b_1, b_2, \dots, b_n\}$. $m$ and $n$ are finite natural numbers. The payoff of player $A$ when a combination of players' strategies is $(a_i, b_j)$ is denoted by $M(a_i, b_j)$. Since we consider a zero-sum game, the payoff of player $B$ is equal to $-M(a_i, b_j)$. Let $p_i$ be a probability that $A$ chooses his strategy $a_i$, and $q_j$ be a probability that $B$ chooses his strategy $b_j$. A mixed strategy of $A$ is represented by a probability distribution over $S_A$, and is denoted by $\mathbf{p}=(p_1, p_2, \dots, p_m)$ with $\sum_{i=1}^mp_i=1$. Similarly, a mixed strategy of $B$ is denoted by $\mathbf{q}=(q_1, q_2, \dots, q_n)$ with $\sum_{j=1}^nq_j=1$. A combination of mixed strategies $(\mathbf{p}, \mathbf{q})$ is called a \emph{profile}. The expected payoff of player $A$ at a profile $(\mathbf{p}, \mathbf{q})$ is written as follows,
\[M(\mathbf{p}, \mathbf{q})=\sum_{i=1}^m\sum_{j=1}^np_iM(a_i,b_j)q_j.\]
We assume that $M(a_i,b_j)$ is finite. Then, since $M(\mathbf{p}, \mathbf{q})$ is linear with respect to probability distributions over the sets of pure strategies of players, it is a uniformly continuous function. The expected payoff of $A$ when he chooses a pure strategy $a_i$ and $B$ chooses a mixed strategy $\mathbf{q}$ is $M(a_i, \mathbf{q})=\sum_{j=1}^nM(a_i, b_j)q_j$, and his expected payoff when he chooses a mixed strategy $\mathbf{p}$ and $B$ chooses a pure strategy $b_j$ is $M(\mathbf{p}, b_j)=\sum_{i=1}^mp_iM(a_i, b_j)$. The set of all mixed strategies of $A$ is denoted by $P$, and that of $B$ is denoted by $Q$. $P$ is an $m-1$-dimensional simplex, and $Q$ is an $n-1$-dimensional simplex.

 We call $v_A(\mathbf{p})=\inf_{\mathbf{q}}M(\mathbf{p}, \mathbf{q})$ the \emph{guaranteed payoff} of $A$ at $\mathbf{p}$. And we define $v_A^*$ as follows,
\begin{eqnarray*}
v_A^*=\sup_\mathbf{p} \inf_\mathbf{q}M(\mathbf{p},\mathbf{q})\label{e1}
\end{eqnarray*}
This is a constructive version of the maximin payoff. Similarly, we call $v_B(\mathbf{q})=\sup_{\mathbf{p}}M(\mathbf{p}, \mathbf{q})$ the guaranteed payoff of player $B$ at $\mathbf{q}$, and define $v_B^*$ as follows,
\begin{eqnarray*}
v_B^*=\inf_\mathbf{q} \sup_\mathbf{p}M(\mathbf{p},\mathbf{q}).
\end{eqnarray*}
This is a constructive version of the minimax payoff. For a fixed $\mathbf{p}$ we have $\inf_{\mathbf{q}}M(\mathbf{p}, \mathbf{q})\leq M(\mathbf{p}, \mathbf{q})\ \mathrm{for\ all}\ \mathbf{q}$, and so
\[\sup_{\mathbf{p}} \inf_{\mathbf{q}}M(\mathbf{p}, \mathbf{q})\leq \sup_{\mathbf{p}} M(\mathbf{p}, \mathbf{q})\ \mathrm{for\ all}\ \mathbf{q}\]
holds. Then, we obtain $\sup_{\mathbf{p}} \inf_{\mathbf{q}}M(\mathbf{p}, \mathbf{q})\leq \inf_{\mathbf{q}}\sup_{\mathbf{p}} M(\mathbf{p}, \mathbf{q})$. This is rewritten as
\begin{equation}
v_A^*\leq v_B^*.\label{e13}
\end{equation}

Define a function $\Gamma=(\mathbf{p}'(\mathbf{p}, \mathbf{q}), \mathbf{q}'(\mathbf{p}, \mathbf{q}))$ as follows;
\[p_i'(\mathbf{p}, \mathbf{q})=\frac{p_i+\max(M(a_i, \mathbf{q})-M(\mathbf{p}, \mathbf{q}),0)}{1+\sum_{k=1}^m\max(M(a_k, \mathbf{q})-M(\mathbf{p}, \mathbf{q}),0)},\]
\[q_j'(\mathbf{p}, \mathbf{q})=\frac{q_j+\max(M(\mathbf{p}, \mathbf{q})-M(\mathbf{p},b_j),0)}{1+\sum_{k=1}^n\max(M(\mathbf{p}, \mathbf{q})-M(\mathbf{p},b_k),0)}.\]

We assume the following condition;
\begin{assumption}
All sequences $((\mathbf{p}_n, \mathbf{q}_n))_{n\geq 1}$, $((\mathbf{p}'_n, \mathbf{q}'_n))_{n\geq 1}$ in $P\times Q$ such that $\max(M(a_i, \mathbf{q}_n)-M(\mathbf{p}_n, \mathbf{q}_n),0)\longrightarrow 0$, $\max(M(\mathbf{p}_n, \mathbf{q}_n)-M(\mathbf{p}_n,b_j),0)\longrightarrow 0$, $\max(M(a_i, \mathbf{q}'_n)-M(\mathbf{p}'_n, \mathbf{q}'_n),0)\longrightarrow 0$ and $\max(M(\mathbf{p}'_n, \mathbf{q}'_n)-M(\mathbf{p'}_n,b_j),0)\longrightarrow 0$ for all $i$ and $j$ are eventually close in the sense that $|(\mathbf{p}_n, \mathbf{q}_n)-(\mathbf{p}'_n, \mathbf{q}'_n)|\longrightarrow 0$.\label{assump1}
\end{assumption}
Under this assumption, we find
\begin{quote}
All sequences $((\mathbf{p}_n, \mathbf{q}_n))_{n\geq 1}$, $((\mathbf{p}'_n, \mathbf{q}'_n))_{n\geq 1}$ in $P\times Q$ such that $|\Gamma((\mathbf{p}_n, \mathbf{q}_n))-(\mathbf{p}_n, \mathbf{q}_n)|\longrightarrow 0$ and and $|\Gamma((\mathbf{p}'_n, \mathbf{q}'_n))-(\mathbf{p}'_n, \mathbf{q}'_n)|\longrightarrow 0$ are eventually close in the sense that $|(\mathbf{p}_n, \mathbf{q}_n)-(\mathbf{p}'_n, \mathbf{q}'_n)|\longrightarrow 0$.
\end{quote}
Thus, $\Gamma$ has sequentially at most one fixed point.

Summing up $p_i'$ from 1 to $m$, for each $i$
\[\sum_{i=1}^{m}p_i'(\mathbf{p}, \mathbf{q})=\frac{\sum_{i=1}^{m}p_i+\sum_{i=1}^{m}\max(M(a_i, \mathbf{q})-M(\mathbf{p}, \mathbf{q}),0)}{1+\sum_{k=1}^m\max(M(a_k, \mathbf{q})-M(\mathbf{p}, \mathbf{q}),0)}=1.\]
Similarly, summing up $q_j'$ from 1 to $n$, for each $j$
\[\sum_{j=1}^{n}q_j'(\mathbf{p}, \mathbf{q})=\frac{\sum_{j=1}^{n}q_j+\sum_{j=1}^{n}\max(M(\mathbf{p}, \mathbf{q})-M(\mathbf{p},b_j),0)}{1+\sum_{k=1}^n\max(M(\mathbf{p}, \mathbf{q})-M(\mathbf{p},b_k),0)}=1.\]
Let $\mathbf{p}'(\mathbf{p}, \mathbf{q})=(p_1', p_2', \dots, p_m')$, $\mathbf{q}'(\mathbf{p}, \mathbf{q})=(q_1', q_2', \dots, q_n')$. Then, $\Gamma=(\mathbf{p}'(\mathbf{p}, \mathbf{q}), \mathbf{q}'(\mathbf{p}, \mathbf{q}))$ is a uniformly continuous function from $P\times Q$ into itself. There are $m+n-2$ independent vectors in $P\times Q$, and so $P\times Q$ is an $m+n-2$-dimensional space. Since it is a product of two simplices, it is a compact subset of a metric space. Therefore, $\Gamma$ has a fixed point. Let $(\mathbf{\tilde{p}}, \mathbf{\tilde{q}})$ be the fixed point, and $\lambda=\sum_{k=1}^n\max(M(a_k, \mathbf{\tilde{q}})-M(\mathbf{\tilde{p}}, \mathbf{\tilde{q}}),0)$, $\lambda'=\sum_{k=1}^m\max(M(\mathbf{\tilde{p}}, \mathbf{\tilde{q}})-M(\mathbf{\tilde{p}},b_k),0)$. Then, 
\[\frac{\tilde{p}_i+\max(M(a_i, \mathbf{\tilde{q}})-M(\mathbf{\tilde{p}}, \mathbf{\tilde{q}}),0)}{1+\lambda}=\tilde{p}_i,\]
\[\frac{\tilde{q}_j+\max(M(\mathbf{\tilde{p}}, \mathbf{\tilde{q}})-M(\mathbf{\tilde{p}},b_j),0)}{1+\lambda'}=\tilde{q}_j.\]
Thus, we have
\[\max(M(a_i, \mathbf{\tilde{q}})-M(\mathbf{\tilde{p}}, \mathbf{\tilde{q}}),0)=\lambda \tilde{p}_i,\]
and
\[\max(M(\mathbf{\tilde{p}}, \mathbf{\tilde{q}})-M(\mathbf{\tilde{p}},b_j),0)=\lambda'\tilde{q}_j.\]
Since $M(\mathbf{\tilde{p}}, \mathbf{\tilde{q}})=\sum_{i=1}^m M(a_i,\mathbf{\tilde{q}})$, it is impossible that $\max(M(a_i, \mathbf{\tilde{q}})-M(\mathbf{\tilde{p}}, \mathbf{\tilde{q}}),0)=M(a_i, \mathbf{\tilde{q}})-M(\mathbf{\tilde{p}}, \mathbf{\tilde{q}})>0$ for all $i$ such that $\tilde{p}_i>0$. Therefore, $\lambda=0$, and we have
$\sup_{\mathbf{p}}M(\mathbf{p}, \mathbf{\tilde{q}})=M(\mathbf{\tilde{p}}, \mathbf{\tilde{q}})$. Similarly, we obtain $\lambda'=0$ and $\inf_{\mathbf{q}}M(\mathbf{\tilde{p}}, \mathbf{q})=M(\mathbf{\tilde{p}}, \mathbf{\tilde{q}})$. Then,
\[v_B^*=\inf_{\mathbf{q}}\sup_{\mathbf{p}}M(\mathbf{p}, \mathbf{\tilde{q}})\leq \sup_{\mathbf{p}}\inf_{\mathbf{q}}M(\mathbf{\tilde{p}}, \mathbf{q})=v_A^*.\]
With (\ref{e1}) we obtain
\[v_A^*=v_B^*.\]
Therefore, the value of the game is determined at the fixed point of $\Gamma$.
\begin{table}[htpb]
\begin{center}
\begin{tabular}{c|c|c|c|}
\multicolumn{4}{c}{\ \ \ \ \ \ \ \ \ \ \ \ \ \ \ \ Player 2} \\ \cline{2-4}
& &X&Y\\ \cline{2-4}
Player&X&1, -1 &-1, 1\\ \cline{2-4}
1&Y& -1, 1& 1, -1\\ \cline{2-4}
\end{tabular}
\vspace{.3cm}
\caption{Example of game}
\label{ga-1}
\end{center}
\end{table}

Consider an example. See a game in Table \ref{ga-1}. It is an example of the so-called Matching-Pennies game. Pure strategies of Player 1 and 2 are $X$ and $Y$. The left side number in each cell represents the payoff of Player 1 and the right side number represents the payoff of Player 2. Let $p_X$ and $1-p_X$ denote the probabilities that Player 1 chooses, respectively, $X$ and $Y$, and $q_X$ and $1-q_X$ denote the probabilities for Player 2. Denote the expected payoff of Player 1 by $M(p_X, q_X)$. Since we consider a zero-sum game, the expected payoff of Player 2 is $-M(p_X, q_X)$. Then,
\begin{align*}
M(p_X, q_X)=p_Xq_X-(1-p_X)q_X-p_X(1-q_X)+(1-p_X)(1-q_X)=(2p_X-1)(2q_X-1)
\end{align*}
Denote the payoff of Player 1 when he chooses $X$ by $M(X, q_X)$, and that when he chooses $Y$ by $M(Y, q_X)$. Similarly for Player B. Then,
\[M(X, q_X)=2q_X-1,\ M(Y, q_X)=1-2q_X,\ -M(p_X,X)=1-2p_X,\ -M(p_X,Y)=2p_X-1.\]
And we have
\[\mathrm{When\ }q_X>\frac{1}{2},\ M(X, q_X)>M(Y, q_X)\ \mathrm{and}\ M(X, q_X)>M(p_X,q_X)\ \mathrm{for}\ p_X<1,\]
\[\mathrm{When\ }q_X<\frac{1}{2},\ M(Y, q_X)>M(X, q_X)\ \mathrm{and}\ M(Y, q_X)>M(p_X,q_X)\ \mathrm{for}\ p_X>0,\]
\[\mathrm{When\ }p_X>\frac{1}{2},\ -M(p_X, Y)>-M(p_X, X)\ \mathrm{and}\ -M(p_X, Y)>-M(p_X,q_X)\ \mathrm{for}\ q_X>0,\]
\[\mathrm{When\ }p_X<\frac{1}{2},\ -M(p_X, X)>-M(p_X, Y)\ \mathrm{and}\ -M(p_X, X)>-M(p_X,q_X)\ \mathrm{for}\ q_X<1.\]

Consider sequences $(p_X(m))_{m\geq 1}$ and $(q_X(m))_{m\geq 1}$, and let $0<\varepsilon<\frac{1}{2}$, $0<\delta<\varepsilon$. There are the following cases.
\begin{enumerate}
\item 
\begin{enumerate}
	\item If $p_X(m)>\frac{1}{2}+\delta$ and $q_X(m)>\frac{1}{2}+\delta$, or
	\item $p_X(m)>\frac{1}{2}+\delta$ and $q_X(m)<\frac{1}{2}-\delta$, or
	\item $p_X(m)<\frac{1}{2}-\delta$ and $q_X(m)<\frac{1}{2}-\delta$, or
	\item $p_X(m)<\frac{1}{2}-\delta$ and $q_X(m)>\frac{1}{2}+\delta$, or
	\item $p_X(m)>\frac{1}{2}+\delta$ and $\frac{1}{2}-\varepsilon<q_X(m)<\frac{1}{2}+\varepsilon$, or
	\item $p_X(m)<\frac{1}{2}-\delta$ and $\frac{1}{2}-\varepsilon<q_X(m)<\frac{1}{2}+\varepsilon$, or
	\item $\frac{1}{2}-\varepsilon<p_X(m)<\frac{1}{2}+\varepsilon$, and $q_X(m)>\frac{1}{2}+\delta$ or
	\item $\frac{1}{2}-\varepsilon<p_X(m)<\frac{1}{2}+\varepsilon$, and $q_X(m)<\frac{1}{2}-\delta$ or
\end{enumerate}
then, there exists no pair of $(p_X(m),q_X(m))$ such that $M(X, q_X(m))-M(p_X(m), q_X(m))\longrightarrow 0$ and $-[M(p_X(m), Y)-M(p_X(m), q_X(m))]\longrightarrow 0$.
\item If $\frac{1}{2}-\varepsilon<p_X(m)<\frac{1}{2}+\varepsilon$ and $\frac{1}{2}-\varepsilon<q_X(m)<\frac{1}{2}+\varepsilon$ with $0<\varepsilon<\frac{1}{2}$, $M(X, q_X(m))-M(p_X(m), q_X(m))\longrightarrow 0$, $M(Y, q_X(m))-M(p_X(m), q_X(m))\longrightarrow 0$, $-[M(p_X(m), X)-M(p_X(m), q_X(m))]\longrightarrow 0$ and $-[M(p_X(m), Y)-M(p_X(m), q_X(m))]\longrightarrow 0$, then $(p_X(m), q_X(m))\longrightarrow (\frac{1}{2}, \frac{1}{2})$.
\end{enumerate}
Therefore, the payoff functions satisfy Assumption \ref{assump1}.

\bibliography{yasuhito}
\end{document}